\newtheorem{theo}{Theorem}[section]
\newtheorem{prop}{Proposition}
\newtheorem{lem}[theo]{Lemma}
\theoremstyle{definition}
\newtheorem{definition}{Definition}
\newfont{\got}{eufm10}
\begin{document}
\title{A Mahler measure of a K3-hypersurface expressed as a Dirichlet L-series }
\author{Marie Jos\' e Bertin}
\date{\today}

\keywords{Modular Mahler measure,
Eisenstein-Kronecker's Series, $L$-series of $K3$-surfaces, $l$-adic representations, Livn\'e Criterion, Rankin-Cohen brackets}

\subjclass{11, 14D, 14J}
\email{bertin@math.jussieu.fr}
\address{Universit\' e Pierre et Marie Curie (Paris 6), Institut de Math\'ematiques, 175 rue du Chevaleret, 75013 PARIS }

\maketitle

\begin{abstract} 

We present another example of a $3$-variable polynomial defining a K3-hypersurface and having a logarithmic Mahler measure expressed in terms of a Dirichlet L-series.

\end{abstract}

\section{ Introduction}

The logarithmic Mahler measure $m(P)$ of a Laurent polynomial $P\in \mathbb{C}[X_1^{\pm} ,...,X_n^{\pm} ]$ is defined by

$$
m(P)=\frac {1}{(2\pi i)^n} \int_{\mathbb{T} ^n}\log \vert P(x_1^\pm
,...,x_n^\pm)\vert\frac{dx_1}{x_1}...\frac{dx_n}{x_n}
$$
where $\mathbb{T}^n$ is the n-torus $\{(x_1,...,x_n)\in \mathbb{C}^n /
\vert x_1\vert =...=\vert x_n\vert =1\}$. 

For $n=2$ and polynomials $P$ defining elliptic curves $E$, conjectures have been made, with proofs in the $CM$ case, by various authors \cite{Bo2}, \cite{RV1}, \cite{RV2}. These conjectures give conditions on the polynomial $P$ for getting explicit expressions of $m(P)$ in terms of the $L$-series of $E$. A crucial condition for $P$ is to be ``tempered'', that is the roots of the polynomials of the faces of its Newton polygon are only roots of unity. This condition is related to the link between $m(P)$ and the second group of $K$-theory, \cite{Bei}, \cite{RV2}.

In various papers we obtained results for $n=3$ and polynomials $P$ defining $K3$-surfaces, \cite{Ber1}, \cite{Ber2}, \cite{Ber3}. Our aim is to find an analog of the previous results for $K3$-surfaces. In particular, which condition on the polynomial $P$ ensure the expression of $m(P)$ in terms of the $L$-series of the $K3$-surface plus a Dirichlet $L$-series?
Our investigations concern two families of polynomials in three variables \cite{Ber1}.

This result is the second example of a Mahler measure expressed uniquely in terms of a Dirichlet $L$-series.

The first example  was
$$m(P_0)=m(X+\frac {1}{X}+Y+\frac {1}{Y}+Z+\frac {1}{Z})=d_3=\frac {3\sqrt{3}}{4\pi}L(\chi_{-3},2),$$
where $L(\chi_{-3},2)$ denotes the Dirichlet $L$-series for the quadratic character $\chi_{-3}$ attached to the imaginary quadratic field $\mathbb Q(\sqrt{-3})$.
This equality is easy to prove since the modular part, I mean the part corresponding to the $L$-series of the $K3$-surface, is obviously $0$.

The second example is the following theorem.

\begin{theo}
Let $Q_{-3}$ the Laurent polynomial
$$
\begin{aligned}
Q_{-3} & =X+\frac {1}{X}+Y+\frac {1}{Y}+Z+\frac {1}{Z} \\
        & +XY+\frac {1}{XY}+ZY+\frac {1}{ZY}+XYZ+\frac {1}{XYZ}+3
\end{aligned}
$$

and define
$$d_3=\frac {3\sqrt{3}}{4\pi}L(\chi_{-3},2).$$
Then
$$m(Q_{-3})=\frac {8}{5}d_3.$$
\end{theo}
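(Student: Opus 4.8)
The plan is to compute $m(Q_{-3})$ as the value, at a CM point, of an Eisenstein-Kronecker series obtained by integrating a weight-$3$ modular form, following the method of \cite{Ber1}, \cite{Ber2}, \cite{Ber3}. First I would embed $Q_{-3}$ in a one-parameter family $Q_k$ (for instance replacing the constant $3$ by a parameter $k$), whose generic fibre $\{Q_k=0\}$ is a K3-surface and for which $Q_{-3}$ is a distinguished member. Since $m(Q_k)=\log|k|+o(1)$ as $|k|\to\infty$, it is enough to control $\frac{d}{dk}m(Q_k)$; applying Jensen's formula in one variable and differentiating under the integral sign realizes this derivative as the real part of a period of the holomorphic $2$-form of the surface, hence as a solution of the Picard-Fuchs equation of the family.

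Next I would introduce the modular parameter $\tau$ (a ratio of periods) uniformizing the family, write $k=k(\tau)$, and use that the holomorphic period is a modular form together with the extra weight supplied by $dk$ to recognize $\frac{dm}{d\tau}\,d\tau$ as coming from a weight-$3$ modular form $g(\tau)=\sum_{n\ge 1}a_n q^n$, with $q=e^{2\pi i\tau}$. Integrating and specializing to the CM point $\tau_0$ attached to $\Q(\sqrt{-3})$ (the fibre singled out by the subscript $-3$) then expresses $m(Q_{-3})$ through the value at $\tau_0$ of the Eisenstein-Kronecker series associated with $g$. Locating $\tau_0$ exactly and tracking the normalizing constant produced by the substitution $k\mapsto\tau$ is the bookkeeping that eventually yields the rational factor $8/5$.

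Then I would split $g$ into its Eisenstein and cuspidal parts. The Eisenstein component is the weight-$3$ Eisenstein series attached to $\chi_{-3}$; evaluating its Eisenstein-Kronecker series at the CM point $\tau_0$ produces, by the standard evaluation of such series at points of $\Q(\sqrt{-3})$, a rational multiple of $d_3=\frac{3\sqrt3}{4\pi}L(\chi_{-3},2)$. The cuspidal component is the ``modular part'': the weight-$3$ newform carrying the $L$-series of the transcendental motive of $\{Q_{-3}=0\}$. I would identify this newform by computing the $L$-series of the surface and matching it to a candidate CM form through the Livn\'e criterion, comparing traces of Frobenius of the two $l$-adic representations at a controlling finite set of primes. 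Because the surface has complex multiplication by $\Q(\sqrt{-3})$, this newform is itself a CM form for the same field; the Rankin-Cohen bracket lets me exhibit it explicitly from weight-$1$ theta/Eisenstein data and evaluate its special value, which therefore reduces again to a rational multiple of $L(\chi_{-3},2)$. Summing the two contributions gives $m(Q_{-3})=\frac{8}{5}d_3$.

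The crux, and the main obstacle, is this last reduction: showing that the cuspidal piece does not contribute a genuinely new period of the K3-surface but collapses back onto $d_3$. Concretely the delicate steps are (i) pinning down the correct weight-$3$ newform and certifying modularity via Livn\'e's criterion, and (ii) proving that its relevant $L$-value is a rational multiple of $L(\chi_{-3},2)$ and computing that rational number, so that together with the Eisenstein term the coefficients add up to exactly $8/5$.
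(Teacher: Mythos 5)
Your overall frame --- embed $Q_{-3}$ in the family $Q_k$, express $m(Q_k)$ as a Kronecker--Eisenstein series via the modular parametrization, and specialize at a CM point --- is indeed the paper's starting point (its Theorem 2.2, imported from the earlier work on $Q_k$). But your identification of the CM structure is wrong, and this propagates into the crux of your argument. The subscript in $Q_{-3}$ is just the parameter value $k=-3$; the associated point is $\tau_0=\frac{-3+\sqrt{-15}}{24}$, so the relevant field is $\mathbb{Q}(\sqrt{-15})$, and the lattice sums become sums over the four quadratic forms of discriminant $-60$, namely $m^2+15\kappa^2$, $3m^2+5\kappa^2$, $m^2+m\kappa+4\kappa^2$, $2m^2+m\kappa+2\kappa^2$. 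Accordingly the Peters--Top--van der Vlugt newform carrying the transcendental $L$-series is $f^+=g\theta_1\in S_3\bigl(15,\bigl(\tfrac{\cdot}{15}\bigr)\bigr)$, a CM form for $\mathbb{Q}(\sqrt{-15})$ --- not for $\mathbb{Q}(\sqrt{-3})$ as you assert.

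The genuine gap is your step (ii): you propose to evaluate the cuspidal contribution and show its special value is a rational multiple of $L(\chi_{-3},2)$. No such reduction is available --- $L(f^+,3)$ is a Chowla--Selberg-type period of a K3 with CM by $\mathbb{Q}(\sqrt{-15})$, and if it did reduce to $d_3$ the paper would not emphasize that this is only the second example of a Mahler measure given purely by a Dirichlet $L$-series. What actually happens is an exact cancellation: the weight-$3$ (quadratic-numerator) part of the Kronecker--Eisenstein sum at $\tau_0$ equals a constant times $L(f^+,3)-\bigl(L(f_1,3)+L(f_2,3)\bigr)$, where $f_1=[\theta_5,\theta_3]$ and $f_2=[\theta_1,\theta_{15}]$ are Rankin--Cohen brackets of weight-$1/2$ theta series, and Livn\'e's criterion (with the test set $\{7,11,13,\dots,83\}$, comparing traces of Frobenius of the two $2$-adic representations) proves $L(f_1,s)+L(f_2,s)=L(f^+,s)$, so the whole modular part vanishes. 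The value $\frac{8}{5}d_3$ then comes entirely from the residual weight-$2$ Epstein zeta terms: genus-theory identities of Williams and Zucker--Robertson give, for the forms of discriminant $-60$, differences such as $\sum'\bigl((m^2+mk+4k^2)^{-s}-(2m^2+mk+2k^2)^{-s}\bigr)=2L_{-3}(s)L_5(s)$ and the analogous identity for $m^2+15k^2$, $3m^2+5k^2$, after which $L_5(2)=\frac{4\pi^2}{25\sqrt{5}}$ converts $L_{-3}(2)L_5(2)$ into $\frac{8}{5}d_3$. So $\chi_{-3}$ enters through the factorization of genus characters of discriminant $-60$ into $\chi_{-3}\chi_5$, not through any CM by $\mathbb{Q}(\sqrt{-3})$; without the cancellation idea, your outline cannot be completed.
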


\bigskip

In this theorem the evaluation of the modular part needs the use of Livn\'e's criterion \cite{Y}, since we have to compare two $l$-adic representations, and also recent results about Dirichlet $L$-series \cite{Zu1}.

\bigskip

\noindent{\textbf{Acknowledgments}}

The measure $m(Q_{-3})$ was guessed numerically some years ago by Boyd \cite{Bo1}. His guess and some discussions with Zagier \cite{Za} were probably determinant for the discovery of the proof. So I am pleased to address my grateful thanks to both of them.

\section{Some facts}

The polynomial $Q_{-3}$ belong to the family of polynomials $Q_k$ whose Mahler measure has been studied in a previous paper \cite{Ber1}.

\begin{theo}
Consider the family of Laurent polynomials
 
$$
\begin{aligned}
Q_{k} & =X+\frac {1}{X}+Y+\frac {1}{Y}+Z+\frac {1}{Z} \\
        & +XY+\frac {1}{XY}+ZY+\frac {1}{ZY}+XYZ+\frac {1}{XYZ}-k.
\end{aligned}
$$

Let $k=-(t+\frac {1}{t})-2$ and define
$$
t=\frac {\eta (3\tau)^4\eta (12\tau)^8\eta(2\tau)^{12}}{\eta(\tau)^4\eta
(4\tau)^8\eta (6\tau)^{12}},
$$
where $\eta$ denotes the Dedekind eta function 
$$
\eta (\tau)=e^{\frac {\pi i \tau}{12}} \prod_{n\geq 1}(1-e^{2\pi i n\tau}).
$$
 
Then 

$$
\begin{aligned}  m(Q_k)=&\frac {\Im \tau}{8 \pi ^3}\{ \sum'_{m,\kappa}(2(2\Re
\frac {1}{(m\tau+\kappa)^3(m\bar{\tau}+\kappa)}+\frac
{1}{(m\tau+\kappa)^2(m\bar{\tau}+\kappa)^2})\\ &-32(2\Re \frac
{1}{(2m\tau+\kappa)^3(2m\bar{\tau}+\kappa)}+\frac
{1}{(2m\tau+\kappa)^2(2m\bar{\tau}+\kappa)^2})\\ &-18(2\Re \frac
{1}{(3m\tau+\kappa)^3(3m\bar{\tau}+\kappa)}+\frac
{1}{(3m\tau+\kappa)^2(3m\bar{\tau}+\kappa)^2})\\ &+288(2\Re \frac
{1}{(6m\tau+\kappa)^3(6m\bar{\tau}+\kappa)}+\frac
{1}{(6m\tau+\kappa)^2(6m\bar{\tau}+\kappa)^2}))\}  \end{aligned} $$

\end{theo}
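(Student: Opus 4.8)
The plan is to combine the standard dimension reduction for three–variable Mahler measures with the modular parametrisation of the pencil $\{Q_k=0\}$. First I would integrate out one variable. On the torus $|X|=|Y|=1$ one has $\overline X=1/X$, $\overline Y=1/Y$, so grouping $Q_k$ by powers of $Z$ gives $Q_k=ZA+Z^{-1}\overline A+C$ with $A=1+Y+XY$ and $C=2\Re X+2\Re Y-k\in\mathbb R$. Multiplying by $Z$ produces the quadratic $AZ^2+CZ+\overline A$, whose two roots $z_1,z_2$ satisfy $z_1z_2=\overline A/A$, hence $|z_1z_2|=1$ and exactly one root leaves the unit disc. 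Jensen's formula in $Z$ then collapses the triple integral to $m(Q_k)=\frac1{(2\pi i)^2}\int_{\mathbb T^2}\big(\log|A|+\log^+|z_1|+\log^+|z_2|\big)\frac{dX}{X}\frac{dY}{Y}$, i.e. the integral of $\log$ of the larger root. This first reduction is routine.

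Next I would differentiate in the parameter. Since $k$ enters linearly, $\partial_k m(Q_k)$ is, up to sign and taking real parts, the period $\frac1{(2\pi i)^3}\int_{\mathbb T^3}\frac{1}{Q_k}\frac{dX}{X}\frac{dY}{Y}\frac{dZ}{Z}$ of the holomorphic $2$-form on the K3 surface $\{Q_k=0\}$. For this generically rank-$19$ pencil the Picard–Fuchs operator has order three and its monodromy lies in $SO(2,1)$, so the base is a modular curve and the solution is, after the change of coordinate, a weight-two modular form of $\tau$ (the symmetric square of a weight-one form). I would then verify that $k=-(t+\tfrac1t)-2$ with the stated $\eta$-quotient $t$ is the correct Hauptmodul by matching the $q$-expansion of the hypergeometric period against the weight-two form read off from $\frac{1}{2\pi i}\,\frac{d\log t}{d\tau}$, namely the combination of $E_2(N\tau)$ dictated by the exponents $(4,8,12)$ and the arguments $(\tau,2\tau,3\tau,4\tau,6\tau,12\tau)$. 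This is where the four scales $N\in\{1,2,3,6\}$ and the coefficients $2,-32,-18,288$ (summing to $240$) first emerge.

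The final and hardest step is to reassemble $m(Q_k)$ as a Kronecker–Eisenstein series. Writing $\frac{dm}{d\tau}=\partial_k m\cdot\frac{dk}{d\tau}$ multiplies two weight-two forms; restoring modularity by the appropriate Rankin–Cohen bracket yields a holomorphic weight-four Eisenstein combination, whose real-valued primitive, obtained through the Eisenstein–Kronecker (Weil) summation that renders it single-valued, is exactly $2\Re\frac{1}{(Nm\tau+\kappa)^3(Nm\overline\tau+\kappa)}+\frac{1}{(Nm\tau+\kappa)^2(Nm\overline\tau+\kappa)^2}$ for each $N$. Integrating in $\tau$ and fixing the constant of integration from the behaviour of $m(Q_k)$ as $k\to\infty$ (the cusp) then produces the displayed identity. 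I expect the main obstacle to be precisely this last identification: pinning down the normalising constant $\frac{\Im\tau}{8\pi^3}$ and, above all, securing the coefficient $1$ of the purely non-holomorphic term $\frac1{(Nm\tau+\kappa)^2(Nm\overline\tau+\kappa)^2}$, which forces one to control the conditionally convergent (Hecke-summed) series and to track every constant through the Rankin–Cohen computation.
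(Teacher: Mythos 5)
A point of reference first: this paper does not actually prove Theorem 2.1 — it is imported from the earlier paper \cite{Ber1} — and the proof given there is precisely the Rodriguez-Villegas-style argument you outline: differentiate $m(Q_k)$ in the parameter to obtain the holomorphic period of the K3 pencil, solve the order-three Picard--Fuchs equation through the modular parametrisation by the stated $\eta$-quotient Hauptmodul, identify $\frac{dm}{d\tau}$ with a holomorphic weight-four Eisenstein combination supported at the scales $\tau,2\tau,3\tau,6\tau$ with coefficients $2,-32,-18,288$, and integrate back via the Eisenstein--Kronecker (Hecke-regularised) summation, the constant of integration being fixed at the cusp by $m(Q_k)\sim\log|k|$. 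So your architecture agrees with the actual proof, including the two points you correctly flag as delicate: the coefficient $1$ on the term $\frac{1}{(Nm\tau+\kappa)^2(Nm\bar\tau+\kappa)^2}$, which is forced by requiring the antiholomorphic derivatives to cancel so that the primitive is real, and the conditional convergence of the series.

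Two concrete criticisms. Your Rankin--Cohen step is a genuine confusion: the product $\partial_k m\cdot\frac{dk}{d\tau}$ of the weight-two period with the weight-two meromorphic form $k'(\tau)$ is \emph{already} modular of weight four — there is no modularity to restore — whereas a Rankin--Cohen bracket of two weight-two forms has weight six, not four. (Rankin--Cohen brackets do appear in this paper, but only in the proof of Proposition 2, to construct the weight-three theta forms $f_1=[\theta_5,\theta_3]$ and $f_2=[\theta_1,\theta_{15}]$; they play no role in Theorem 2.1.) In \cite{Ber1} the identification of the weight-four product with the Eisenstein combination is done by direct comparison of $q$-expansions, and establishing it — together with the holomorphy of the product, i.e.\ that the zeros of $dk/d\tau$ absorb the poles coming from the singular fibres — is the real computational content of the theorem, which your outline asserts rather than performs. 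A smaller slip: in your Jensen reduction the $Z$-free coefficient is $C=2\Re X+2\Re Y+2\Re(XY)-k$ (you dropped $XY+\frac{1}{XY}$), though that reduction is in any case not needed once you pass to the derivative $\partial_k m$.
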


\bigskip

\bigskip

Let us recall now the following results.

Given a normalised Hecke eigenform $f$ of some level $N$ and weight $k=3$, we can associate a Galois representation \cite{Hu}, \cite{Sh}
$$\rho_f: {\text Gal}(\bar{\mathbb Q}/\mathbb Q) \rightarrow {\text Gl}(2,\mathbb Q_l).$$

To a normalised Hecke newform $f$ can also be associated an $L$-function $L(f,s)$ by
$$L(f,s):=L(\rho_f,s)$$
(the $L$-series of the Galois representation $\rho_f$). Equivalently, if $f$ has a Fourier expansion $f=\sum_n b_n q^n$, then $L(f,s)$ is also the Mellin transform of $f$
$$L(f,s)=\sum_n \frac {b_n}{n^s}.$$
Moreover, the series $L(f,s)$ has a product expansion
$$L(f,s)=\sum_{n\geq 1} \frac {b_n}{n^s}=\prod_p \frac {1}{1-b_pp^{-s}+\chi(p)p^{k-1-2s}}$$
where $\chi(p)=0$ if $p\mid N$.

\bigskip
Concerning the comparison between $l$-adic representations, Serre's then Livn\'e's result can be found for example in \cite{Y}, \cite{PT}.

\bigskip  
\begin{lem}
Let $\rho_l,\rho_l':G_{\mathbb Q} \rightarrow \hbox{Aut}V_l$ two rational $l$-adic representations with $\hbox{Tr}F_{p,\rho_l}=\hbox{Tr}F_{p,\rho_l'}$ for a set of primes $p$ of density one (i.e. for all but finitely many primes). If $\rho_l$ and $\rho_l'$ fit into two strictly compatible systems, the $L$-functions associated to these systems are the same.
\end{lem}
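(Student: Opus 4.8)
The plan is to reduce the equality of the two $L$-functions to an isomorphism of semisimplified representations, and then to treat separately the finitely many primes of bad reduction. Recall first what strict compatibility buys us: the $L$-function attached to such a system is the Euler product $L(\rho_l,s)=\prod_p\det\bigl(1-p^{-s}F_{p}\mid V_l^{I_p}\bigr)^{-1}$, whose local factor at every prime $p$ outside the common finite ramification set $S$ reduces to $\det(1-p^{-s}F_{p}\mid V_l)^{-1}$, a reciprocal characteristic polynomial lying in $\Q[T]$ and independent of $l$. In particular ``the $L$-function of the system'' is a well-defined object, and it suffices to match the two Euler products factor by factor.

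For the primes $p\notin S$ I would argue by density. By the Chebotarev density theorem the Frobenius elements $\{F_p : p\notin S\}$ are dense in $G_{\mathbb Q}$, and since $\rho_l,\rho_l'$ are continuous and the trace is a continuous function, the hypothesis $\hbox{Tr}\,F_{p,\rho_l}=\hbox{Tr}\,F_{p,\rho_l'}$ on a set of primes of density one propagates to the equality of trace functions $\hbox{Tr}\,\rho_l(g)=\hbox{Tr}\,\rho_l'(g)$ for every $g\in G_{\mathbb Q}$. Working over the characteristic-zero field $\Q_l$, the character of a representation determines its semisimplification, so $\rho_l^{ss}\cong{\rho_l'}^{ss}$. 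Consequently the characteristic polynomials of all Frobenii agree, the good Euler factors coincide identically, and the partial products over $p\notin S$ are equal.

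The \emph{main obstacle} is to upgrade this to equality of the full $L$-functions, that is, to pin down the finitely many Euler factors at the ramified primes $p\in S$, about which a density-one hypothesis is silent and at which the passage to the semisimplification can alter the inertia invariants $V_l^{I_p}$. Here I would exploit the extra rigidity available in our situation rather than attempt a purely local computation: both $L$-series are those of normalised weight $3$ Hecke newforms, whose $p$-th coefficients are exactly the Frobenius traces $b_p=\hbox{Tr}\,F_p$ at good $p$ (compare the product expansion recalled above, in which the determinant factor is the fixed quantity $\chi(p)p^{2}$ determined by the weight and nebentypus). Since these coefficients agree for all but finitely many $p$, strong multiplicity one for $\mathrm{GL}_2$ forces the two newforms to coincide, and their $L$-functions, including the bad Euler factors and the Archimedean data governed by the functional equation, are then literally identical.

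Finally I would note that this is exactly the comparison of $l$-adic representations made effective by Serre and Livn\'e \cite{Y}, \cite{PT}: their refinement replaces the density-one condition by a finite, explicit set of primes (controlled by the ramification and by a suitable $2$-torsion field), so that in the concrete application the hypothesis ``traces agree for all but finitely many $p$'' is verified by checking only finitely many Euler factors. That reduction to a finite check, together with the character-theoretic argument above for the good primes, is where the real work of the proof is located.
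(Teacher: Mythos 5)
The paper offers no proof of this lemma at all: it is recalled as Serre's result, with pointers to \cite{Y} and \cite{PT}, so your proposal can only be measured against the standard argument from those sources --- and its core is exactly that argument, correctly executed. By Chebotarev, the Frobenius conjugacy classes attached to any density-one set of primes are dense in $G_{\mathbb Q}$; continuity and conjugation-invariance of the trace upgrade the hypothesis to $\mathrm{Tr}\,\rho_l(g)=\mathrm{Tr}\,\rho'_l(g)$ for every $g\in G_{\mathbb Q}$; Brauer--Nesbitt in characteristic zero then gives $\rho_l^{\mathrm{ss}}\cong(\rho'_l)^{\mathrm{ss}}$, hence equality of the characteristic polynomials of all Frobenii and of the Euler factors at every prime outside the exceptional set; and strict compatibility makes these factors rational and independent of $l$, so that ``the $L$-function of the system'' is well defined and the two products agree. (A small point in your favour: the lemma's parenthetical equating ``density one'' with ``all but finitely many'' is loose --- cofinite implies density one but not conversely --- and your Chebotarev argument correctly needs only density one.)

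Where your proposal drifts is the treatment of the ramified primes. In the formalism this paper actually works in (Serre's, and that of \cite{PT}; compare Theorem 2.4, where the $L$-function is \emph{defined} as $\prod_{p\neq 3,5}$ of good Euler factors), the $L$-function of a strictly compatible system is the Euler product over primes outside the exceptional set, so the ``main obstacle'' you isolate does not arise: the semisimplification argument already completes the proof, and no assertion about factors at $p\in S$ is made or needed. Your patch for those factors --- strong multiplicity one for $\mathrm{GL}_2$ --- is not wrong as a statement, but it silently imports hypotheses absent from the lemma (that both systems are attached to normalised newforms), and in the paper's actual application it would not even apply directly, since one of the two objects being compared is built from $f_1+f_2$, a sum of two Rankin--Cohen brackets that is nowhere asserted to be a Hecke eigenform. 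Finally, your closing paragraph describes the replacement of the density-one set by a finite test set: that is Livn\'e's criterion, stated separately as Theorem 2.3 in the paper, and it is a refinement that \emph{uses} this lemma rather than a step in its proof.
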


Then the great idea (Serre \cite{Se} , Livn\'e \cite{Li}) is to replace this set of primes of density one by a finite set.

\begin{definition}
A finite set $T$ of primes is said to be an effective test set for a rational Galois representation $\rho_l :G_{\mathbb Q} \rightarrow \hbox{Aut} V_l$ if the previous lemma holds with the set of density one replaced by $T$.
\end{definition}

\begin{definition}
Let $\mathcal P$ denote the set of primes, $S$ a finite subset of $\mathcal P$ with $r$ elements, $S'=S\cup \{-1 \}$. Define for each $t\in \mathcal P$, $t\neq 2$ and each $s\in S'$ the function
$$f_s(t):=\frac {1}{2}(1+\left( \frac {s}{t}\right))$$
and if $T \subset \mathcal P$, $T\cap S=\emptyset$,
$$f:T\rightarrow \left(\mathbb Z/2\mathbb Z \right)^{r+1}$$
such that $$f(t)=\left( f_s(t) \right)_{s\in S'}$$.
\end{definition}

\begin{theo} (Livn\'e's criterion) Let $\rho$ and $\rho'$ be two $2$-adic $G_{\mathbb Q}$-representations which are unramified outside a finite set $S$ of primes, satisfying
$$\hbox{Tr} F_{p,\rho} \equiv \hbox{Tr} F_{p,\rho'} \equiv 0 (\hbox{mod} 2)$$
and
$$\hbox{det} F_{p,\rho} \equiv \hbox{det} F_{p,\rho'} (\hbox{mod} 2)$$
for all $p\notin S \cup \{2\}$.

Any finite set $T$ of rational primes disjoint from $S$ with $f(T)=\left ( \mathbb Z/2\mathbb Z \right )^{r+1}\backslash \{0\}$ is an effective test set for $\rho$ with respect to $\rho'$.
\end{theo}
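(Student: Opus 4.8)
This statement is Livné's criterion, so the plan is to reconstruct the Faltings--Serre--Livné descent, whose goal is to replace the density-one agreement of traces demanded by the preceding Lemma by the finite verification on $T$. First I would record the reduction supplied by that Lemma: if the Frobenius traces of $\rho$ and $\rho'$ agree on a set of primes of density one, then --- the two fitting into strictly compatible systems --- their $L$-functions coincide, the underlying mechanism being Brauer--Nesbitt together with Chebotarev. Thus it suffices to show that exact trace equality on $T$, together with the standing mod-$2$ hypotheses, forces $\mathrm{Tr}\,F_{p,\rho}=\mathrm{Tr}\,F_{p,\rho'}$ for a density-one set of $p$.

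Next I would examine the residual representations $\bar\rho,\bar\rho'$ modulo $2$. The hypotheses $\mathrm{Tr}\,F_{p,\rho}\equiv\mathrm{Tr}\,F_{p,\rho'}\equiv 0$ and $\det F_{p,\rho}\equiv\det F_{p,\rho'}\pmod 2$ force the characteristic polynomial of each $\bar\rho(\mathrm{Frob}_p)$ to be $(x+1)^2$, so both residual representations are unipotent and share a semisimplification. Consequently the joint representation $g\mapsto(\rho(g),\rho'(g))$ has image, after reduction, inside a \emph{pro-$2$} group: every obstruction to matching $\rho$ and $\rho'$ will be $2$-power torsion, and its leading layer will be elementary abelian.

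The heart of the argument --- and the step I expect to be the main obstacle --- is the induction on the $2$-adic level. Suppose the traces agree modulo $2^{n}$ for every prime but not modulo $2^{n+1}$. One must then show, using the unipotence just established, that the leading defect $g\mapsto\big(\mathrm{Tr}\,\rho(g)-\mathrm{Tr}\,\rho'(g)\big)/2^{\,n}\bmod 2$ is \emph{additive}, hence defines a nontrivial homomorphism to $\mathbb{Z}/2\mathbb{Z}$, i.e.\ a nontrivial \emph{quadratic} character $\psi_n$ of $G_{\mathbb Q}$. Since $\rho$ and $\rho'$ are unramified outside $S$, the character $\psi_n$ is unramified outside $S\cup\{2\}$ and is therefore cut out by a subextension of the single multiquadratic field $\mathbb{Q}\big(\sqrt{s}:s\in S'\big)$, whose Galois group is $\big(\mathbb{Z}/2\mathbb{Z}\big)^{r+1}$. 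Establishing that each successive layer is genuinely governed by one $\mathbb{F}_2$-valued character living in this one fixed field, uniformly in $n$, is the delicate point; it is exactly here that the unipotence and the control of the determinant are used.

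Finally I would invoke the function $f$ to conclude. Under the identification above, $f(t)=(f_s(t))_{s\in S'}$ with $f_s(t)=\tfrac12\big(1+(\tfrac{s}{t})\big)$ records the splitting type of $t$ in each $\mathbb{Q}(\sqrt s)$, hence determines the Frobenius class of $t$ in $\big(\mathbb{Z}/2\mathbb{Z}\big)^{r+1}$. The hypothesis $f(T)=\big(\mathbb{Z}/2\mathbb{Z}\big)^{r+1}\setminus\{0\}$ is precisely the condition ensuring that no nontrivial quadratic character of this group can be trivial on all the Frobenius classes attached to $T$. Hence there is $t\in T$ with $\psi_n(\mathrm{Frob}_t)\neq 1$, which forces $\mathrm{Tr}\,F_{t,\rho}\not\equiv\mathrm{Tr}\,F_{t,\rho'}\pmod{2^{n+1}}$, contradicting the assumed exact trace equality on $T$. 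Therefore no such level $n$ exists: the traces agree modulo every power of $2$, so they agree on a density-one set, and by the Lemma $\rho$ and $\rho'$ have the same $L$-function, i.e.\ $T$ is an effective test set for $\rho$ with respect to $\rho'$.
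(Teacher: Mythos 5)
First, a point of reference: the paper does not prove this theorem at all --- it is quoted as a known result of Livn\'e (\cite{Li}, via \cite{Y} and \cite{PT}) and used as a black box in Proposition 2. So your proposal can only be measured against Livn\'e's published argument, and against that standard it has a genuine gap, located exactly at the step you yourself flag as ``the delicate point''. Your induction assumes that if the trace deviation $t(g)=\mathrm{Tr}\,\rho(g)-\mathrm{Tr}\,\rho'(g)$ vanishes mod $2^n$ but not mod $2^{n+1}$, then $g\mapsto t(g)/2^n \bmod 2$ is \emph{additive}, hence a quadratic character $\psi_n$ cut out by the multiquadratic field. This is precisely what fails (or at least is unproven) in the situation covered by the hypotheses: since $\mathrm{Tr}\equiv 0$ and $\det\equiv 1 \pmod 2$, both residual representations are unipotent, so the joint image is pro-$2$ and one is in the \emph{residually reducible} regime where the classical Faltings--Serre character induction (which works when the residual representation is absolutely irreducible, cf.\ \cite{Se}) breaks down. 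Traces are not additive functions on a group; writing elements of the pro-$2$ image as products of Frattini generators and expanding via Cayley--Hamilton, the leading deviation acquires quadratic and cubic cross-terms in the $\mathbb{F}_2$-coordinates of the Frattini quotient. Your own example-free assertion of additivity is exactly the content that needs proof, and in general it is false.

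Livn\'e's actual argument handles this by proving a weaker but sufficient structural fact: the relevant deviation, viewed as a function on $\mathrm{Gal}\bigl(\mathbb{Q}(\sqrt{s}:s\in S')/\mathbb{Q}\bigr)\cong(\mathbb{Z}/2\mathbb{Z})^{r+1}$, is a \emph{polynomial function of degree at most} $3$ vanishing at the identity --- not a linear form. Accordingly his criterion requires the Frobenius images of $T$ to form a ``non-cubic'' subset of $(\mathbb{Z}/2\mathbb{Z})^{r+1}$, i.e.\ one on which no nonzero polynomial of degree $\le 3$ vanishing at $0$ can vanish identically. The version stated in this paper, with $f(T)=(\mathbb{Z}/2\mathbb{Z})^{r+1}\setminus\{0\}$, is the special case in which $T$ covers \emph{every} nonzero class: the full complement of $0$ is non-cubic, since a reduced polynomial vanishing on all of $\mathbb{F}_2^{r+1}$ is zero. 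Note that this requirement ($2^{r+1}-1$ classes, matching the $15$ primes of the test set $T$ in Proposition 2) is much stronger than what your character argument would need (merely that the classes $f(T)$ meet the support of every nonzero linear functional) --- a sign that the true obstruction is of higher degree than linear. To repair your proof you would have to replace the claimed characters $\psi_n$ by this degree-$\le 3$ polynomial control of the deviation on the Frattini quotient, which is the real technical heart of \cite{Li}; the Chebotarev-plus-compatible-systems frame you set up around it is fine and matches how the criterion is deployed here.
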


\bigskip

The $K3$-surface $\tilde{X}$ defined by the polynomial $Q_{-3}$ has been studied by Peters, Top and van der Vlugt \cite{PT}. In particular they proved the theorem.

\begin{theo}
There is a system $\rho=(\rho_l)$ of 2-dimensional $l$-adic representations of $G_{\mathbb Q}=\text{Gal}(\bar{\mathbb Q}/\mathbb Q)$
$$\rho_l: G_{\mathbb Q} \rightarrow \text{Aut}H_{\text{trc}}^2(\tilde{X},\mathbb Q_l).$$
The system $\rho=(\rho_l)$ has an $L$-function
$$L(s,\rho)=\prod_{p\neq 3,5} \frac {1}{1-A_p p^{-s}+\left(\frac {p}{15}\right)p^2p^{-2s}}.$$

This $L$-function is the $L$-function of the modular form $f^+=g\theta_1 \in S_3(15,\left(\frac {.}{15}\right))$ where 
$$\theta_1=\sum_{m,n\in \mathbb Z}q^{m^2+mn+4n^2} \,\,\,\,\,\,\,g=\eta(z) \eta(3z) \eta(5z) \eta(15z)$$
and $\eta$ is the Dedekind eta function. The Mellin transform  $\sum \frac {b_n}{n^s}$ of $f^+$ satisfies $b_p=A_p$ for $p\neq 3,5$, where $A_p$ can be computed as follows.
\begin{itemize}
\item If $p\equiv 1$ or $4$ mod. $15$, find an integral solution of the equation $x^2+xy+4y^2=p$. Then $A_p=2x^2-7y^2+2xy$.
\item If $p\equiv 2$ or $8$ mod. $15$, find an integral solution of the equation $2x^2+xy+2y^2=p$. Then $A_p=x^2+8xy+y^2$.
\end{itemize}

\end{theo}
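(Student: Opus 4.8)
The plan is to prove the asserted equality of $L$-functions by matching the two-dimensional $l$-adic system $\rho=(\rho_l)$ carried by the transcendental part of $H^2(\tilde X)$ against the system attached to the weight-three form $f^+$, and then to extract the explicit Frobenius traces from the arithmetic of $\mathbb Q(\sqrt{-15})$.

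First I would pin down the geometry. One checks that a smooth model of $Q_{-3}=0$ is a K3-surface of maximal Picard number $20$, so that $H^2_{\text{trc}}(\tilde X,\mathbb Q_l)$ is two-dimensional and carries a Galois representation $\rho_l$. The cup-product pairing takes values in $\mathbb Q_l(-2)$, so $\det\rho_l$ is $p^2$ times the quadratic character cutting out the discriminant of the transcendental lattice; since this discriminant is $-15$, the determinant equals $\left(\frac{p}{15}\right)p^2$ and the bad primes are confined to $\{2,3,5\}$. On the automorphic side I would confirm that $f^+=g\,\theta_1$ has weight three ($\eta(z)\eta(3z)\eta(5z)\eta(15z)$ has weight two, $\theta_1$ weight one) and nebentypus $\left(\frac{.}{15}\right)$, verify by a dimension count that it is the essentially unique Hecke newform of $S_3\!\left(15,\left(\tfrac{.}{15}\right)\right)$ with these invariants, and attach to it the representation $\rho_{f^+}$ recalled above, whose determinant is likewise $\left(\frac{p}{15}\right)p^2$.

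Next I would invoke Livn\'e's criterion. Both $\rho$ and $\rho_{f^+}$ are unramified outside $S=\{2,3,5\}$; one verifies that their determinants agree and that $\mathrm{Tr}\,F_{p,\rho}\equiv\mathrm{Tr}\,F_{p,\rho_{f^+}}\equiv0\pmod 2$, the evenness following from the point counts on one side and from the explicit coefficients on the other. With $r=|S|=3$ I would then exhibit a finite test set $T$, disjoint from $S$, with $f(T)=(\mathbb Z/2\mathbb Z)^{4}\setminus\{0\}$, and for each $p\in T$ compute $\mathrm{Tr}\,F_{p,\rho}$ from the number of $\mathbb F_p$-points of $\tilde X$, after removing the algebraic (N\'eron--Severi) contribution, and compare it with the coefficient $b_p$ read from the $q$-expansion of $g\,\theta_1$. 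Livn\'e's theorem then upgrades these finitely many coincidences to $L(s,\rho)=L(f^+,s)$.

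Finally I would derive the closed formulas for $A_p$ from the complex multiplication by $\mathbb Q(\sqrt{-15})$. A split prime $p\equiv1,2,4,8\pmod{15}$ is represented by one of the two reduced forms of discriminant $-15$, the principal form $x^2+xy+4y^2$ when $p\equiv1,4$ and the form $2x^2+xy+2y^2$ when $p\equiv2,8$, the genus being fixed by $p$ modulo $15$. Writing the Fourier coefficient as $b_p=\psi(\mathfrak p)+\psi(\bar{\mathfrak p})$ for the associated Hecke character $\psi$ of infinity type $(2,0)$ and evaluating $\pi^2+\bar\pi^2$ for a generator (respectively working on the non-principal ideal) yields $A_p=2x^2+2xy-7y^2$ and $A_p=x^2+8xy+y^2$ in the two cases. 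The step I expect to be the main obstacle is the explicit point-counting on $\tilde X$ over the test primes together with the determination of the Frobenius action on the N\'eron--Severi lattice needed to isolate the transcendental trace: it is this trace comparison on $T$ that actually drives Livn\'e's criterion, whereas the determinant identity, the mod-two congruences and the ramification set are comparatively routine once the transcendental lattice is understood.
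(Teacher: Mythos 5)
The paper offers no proof of this statement at all—it is quoted from Peters, Top and van der Vlugt \cite{PT}—and your outline follows essentially the same route as that cited proof: isolate the two-dimensional transcendental Galois representation of the singular $K3$, compare it with the representation attached to $f^+=g\theta_1$ via Livn\'e's finite-test-set criterion (ramification outside $\{2,3,5\}$, even traces, matching determinants $\left(\frac{p}{15}\right)p^2$, point counts over the test primes after removing the N\'eron--Severi contribution), and evaluate $A_p$ through the Hecke character of $\mathbb Q(\sqrt{-15})$ of infinity type $(2,0)$ on the two form classes of discriminant $-15$, which indeed reproduces $2x^2+2xy-7y^2$ and $x^2+8xy+y^2$. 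The one small inaccuracy is the claim that $f^+$ is the essentially unique newform of $S_3\bigl(15,\left(\frac{.}{15}\right)\bigr)$—the space contains the twin CM forms $f^{\pm}$, whose coefficients differ in sign at $p\equiv 2,8 \pmod{15}$—but this is harmless since your trace comparison over the test set $T$ singles out the correct one in any case.
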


\section{Proof of theorem 1}

The proof follows from three propositions.

\begin{prop}

$$ 
\begin{aligned}
m(Q_{-3})= & \frac{3\sqrt{15}}{\pi^3}\sum'_{m',\kappa}\left ( \frac {15k^2-m'^2}{(m'^2+15\kappa^2)^3}+ \frac {-5k^2+3m'^2}{(3m'^2+5\kappa^2)^3}\right)\\
 & +\left(\frac {1}{2}\frac{2m'^2+2m'\kappa-7\kappa^2}{(m'^2+m'\kappa +4\kappa^2)^3}+\frac {1}{2}\frac{m'^2+8m'\kappa+\kappa^2}{(2m'^2+m'\kappa +2\kappa^2)^3} \right )\\
 &+ \frac{6\sqrt{15}}{\pi^3}\sum'_{m',\kappa}\left (\frac {1}{(m'^2+15\kappa^2)^2} -\frac {1}{(3m'^2+5\kappa^2)^2}\right)\\
 &+\left(\frac{1}{(2m'^2+m'\kappa +2\kappa^2)^2} -\frac{1}{(m'^2+m'\kappa +4\kappa^2)^2}\right )
\end{aligned}
$$

\end{prop}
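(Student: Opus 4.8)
The plan is to specialize the general Eisenstein--Kronecker formula of Theorem 2.2 to the value $k=-3$ and then to transcribe the resulting complex lattice sums into real sums over integral binary quadratic forms. First I would fix the modular parameter: the defining relation $k=-(t+\frac{1}{t})-2=-3$ forces $t+\frac{1}{t}=1$, i.e. $t=e^{\pm i\pi/3}$, a primitive sixth root of unity. Substituting this value into the eta-quotient that defines $t$ and inverting pins down the corresponding point $\tau_0$ in the upper half-plane. The arithmetic of the four forms appearing in the statement, of discriminants $-15$ and $-60$, shows that $\tau_0$ is a CM point generating $\mathbb{Q}(\sqrt{-15})$, and it is the value of $\Im\tau_0$ that ultimately produces the factor $\sqrt{15}$ in the constants $\frac{3\sqrt{15}}{\pi^3}$ and $\frac{6\sqrt{15}}{\pi^3}$.

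The core of the argument is the conversion of the four Eisenstein--Kronecker families attached to the lattices $\mathbb{Z}+\mathbb{Z}\,j\tau_0$, $j\in\{1,2,3,6\}$. For each family I set $w=jm\tau_0+\kappa$ and use the CM relation $\tau_0,\bar\tau_0\in\mathbb{Q}(\sqrt{-15})$ to write $|w|^2$ as a rational multiple of an integral positive-definite form $Q_j(m,\kappa)$. The weight-$(2,2)$ pieces $\frac{1}{(jm\tau_0+\kappa)^2(jm\bar\tau_0+\kappa)^2}=\frac{1}{|w|^4}$ then become $\frac{\text{const}}{Q_j^2}$, while for the weight-$(3,1)$ pieces I factor $(jm\tau_0+\kappa)^3(jm\bar\tau_0+\kappa)=|w|^2\,(jm\tau_0+\kappa)^2$ and take real parts, giving $2\Re\frac{1}{w^3\bar w}=\frac{2\Re(w^2)}{|w|^6}=\frac{2\Re(w^2)}{\text{const}\cdot Q_j^3}$. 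The numerators $2\Re(w^2)$ are themselves integral quadratic forms, and for the maximal order one computes exactly $2m'^2+2m'\kappa-7\kappa^2$ and $m'^2+8m'\kappa+\kappa^2$ after relabeling $(m,\kappa)\mapsto(m',\kappa)$; these are precisely the trace polynomials recorded in Theorem 2.4 for $x^2+xy+4y^2$ and $2x^2+xy+2y^2$. Recognizing this coincidence is what will later permit the comparison with $L(f^+,s)$, although for Proposition 1 itself it is just the outcome of the real-part computation.

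Finally I would assemble the four contributions with their weights $2,-32,-18,288$ and the prefactor $\frac{\Im\tau_0}{8\pi^3}$, replacing each $Q_j$ by a reduced representative via the $SL_2(\mathbb{Z})$-invariance of the Epstein-type sums $\sum' Q^{-s}$, and reorganize everything into the four reduced forms $[1,0,15],[3,0,5]$ of discriminant $-60$ and $[1,1,4],[2,1,2]$ of discriminant $-15$, with the displayed signs and the $\tfrac12$ factors on the discriminant $-15$ terms coming from the differing homothety normalizations of the lattices.

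The main obstacle I expect lies in this last reorganization. The lattices with $j=3$ and $j=6$ do not have discriminant $-15$ or $-60$ on the nose but rather $-135$ and $-540$, so they are conductor-$3$ and conductor-$6$ lattices in $\mathbb{Q}(\sqrt{-15})$; folding their Eisenstein--Kronecker series back onto the conductor-$1$ and conductor-$2$ forms is not a term-by-term substitution but requires the ramification of $3$ in $\mathbb{Q}(\sqrt{-15})$ together with the precise shape of the coefficients $-18=-2\cdot 3^2$ and $288=2\cdot(3\cdot 4)^2$, as well as genus theory to produce both ideal classes of each discriminant. Keeping the homothety scalars, the real-part signs, and these reductions mutually consistent, so that the constants come out exactly as $\frac{3\sqrt{15}}{\pi^3}$ and $\frac{6\sqrt{15}}{\pi^3}$ and all four signs are correct, is where the bulk of the careful bookkeeping will be.
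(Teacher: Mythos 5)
Your overall strategy --- specialize Theorem 2.2 at the CM point for $k=-3$, use $2\Re\frac{1}{w^{3}\bar w}=\frac{2\Re (w^{2})}{\vert w\vert ^{6}}$ and $\frac{1}{w^{2}\bar w^{2}}=\frac{1}{\vert w\vert^{4}}$, and convert each of the four Eisenstein--Kronecker families into sums over integral binary quadratic forms --- is exactly the paper's route, and your handling of the $j=1,2$ lattices is sound. But the step you single out as the main obstacle rests on a miscomputation, and the machinery you propose for it (conductor-$3$ and conductor-$6$ folding, ramification of $3$ in $\mathbb Q(\sqrt{-15})$, genus theory) attacks a problem that does not exist. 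At $k=-3$ one has $\tau_0=\frac{-3+\sqrt{-15}}{24}$, so $\tau_0$ itself satisfies $24x^{2}+6x+1=0$ (discriminant $-60$, not $-15$), $2\tau_0$ satisfies $6x^{2}+3x+1=0$ (disc.\ $-15$), $3\tau_0$ satisfies $8x^{2}+6x+3=0$ (disc.\ $-60$), and $6\tau_0$ satisfies $2x^{2}+3x+3=0$ (disc.\ $-15$). Hence none of the four lattices has discriminant $-135$ or $-540$; the paper simply computes
$$D_{3\tau_0}=\tfrac18(3m^{2}-6m\kappa+8\kappa^{2}),\qquad D_{6\tau_0}=\tfrac12(3m^{2}-3m\kappa+2\kappa^{2}),$$
and the unimodular substitutions $m'=m-\kappa$ (resp.\ $\kappa'=\kappa-m$) turn these directly into $\tfrac18(3m'^{2}+5\kappa^{2})$ and $\tfrac12(2m^{2}+m\kappa'+2\kappa'^{2})$, i.e.\ the forms $[3,0,5]$ and $[2,1,2]$ on the nose. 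Had you tried to carry out the folding you describe, you would be stuck: the weight-$3$ numerators attached to genuinely higher-conductor forms do not collapse term by term onto conductor-$1$ and conductor-$2$ classes; fortunately nothing of the sort is needed, and the ``bulk of the bookkeeping'' you anticipate is in fact four one-line changes of variable.

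Two smaller corrections to your accounting. The factors $\frac12$ on the discriminant $-15$ terms do not come from homothety normalizations: the homothety scalars $\frac1{24},\frac16,\frac18,\frac12$ combine with the weights $2,-32,-18,288$ to give a uniform $(24)^{2}$ in front of all four pieces, which with the prefactor $\frac{\Im\tau_0}{8\pi^{3}}$ and $\Im\tau_0=\frac{\sqrt{15}}{24}$ yields the single constant $\frac{3\sqrt{15}}{\pi^{3}}$. The $\frac12$'s arise from symmetrizing the numerators: for $j=2$ the raw numerator is $m'^{2}+16m'\kappa+4\kappa^{2}$; after $\kappa=\kappa'-m'$ the denominator $4m'^{2}-7m'\kappa'+4\kappa'^{2}$ is symmetric in $(m',\kappa')$, so averaging the numerator over the swap $m'\leftrightarrow\kappa'$ replaces it by $\frac12(-7m'^{2}+16m'\kappa'-7\kappa'^{2})$, which transported back gives $\frac12(2m'^{2}+2m'\kappa-7\kappa^{2})$; likewise for $j=6$. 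Similarly, the odd cross terms (such as $-30m'\kappa$ in the $[1,0,15]$ piece) are killed by the symmetry $\kappa\mapsto-\kappa$ of the summation, which is how the displayed even numerators and signs emerge.
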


\begin{proof}

Define
$$D_{j\tau}=(mj\tau+\kappa)(mj\bar {\tau}+\kappa).$$
So
$$\begin{aligned}
m(Q_k)=\frac {\Im \tau}{8\pi^3}\sum'_{m,\kappa} & [2\frac
{(m(\tau+\bar {\tau})+2\kappa)^2}{D_{\tau}^3}+\frac {-2}{D_{\tau}^2}\\
                                             & -32\frac
{(2m(\tau+\bar {\tau})+2\kappa)^2}{D_{2\tau}^3}+\frac
{32}{D_{2\tau}^2}\\
                                             & -18\frac
{(3m(\tau+\bar {\tau})+2\kappa)^2}{D_{3\tau}^3}+\frac
{18}{D_{3\tau}^2}\\
                                             & +288\frac
{(6m(\tau+\bar {\tau})+2\kappa)^2}{D_{6\tau}^3}-\frac
{288}{D_{6\tau}^2}]
\end{aligned}$$ 
If $k=-3$, then $\tau=\frac {-3+\sqrt{-15}}{24}$ and 

$$\begin{aligned}
&D_\tau=\frac {1}{24}(m^2-6m\kappa +24\kappa^2)=\frac {1}{24}(m'^2+15\kappa^2)\,\,\,\,\,{\hbox{with}}\,\,\,\,\,m'=m-3\kappa\\
& \\
 & D_{2\tau}=\frac {1}{6}(m^2-3m\kappa +6\kappa^2)=\frac {1}{6}(m'^2+m'\kappa+4\kappa^2)\,\,\,\,\,{\hbox{with}}\,\,\,\,\,m'=m-2\kappa\\
 & \\
& D_{3\tau}=\frac {1}{8}(3m^2-6m\kappa +8\kappa^2)=\frac {1}{8}(3m'^2+5\kappa^2)\,\,\,\,\,{\hbox{with}}\,\,\,\,\,m'=m-\kappa\\
 & \\
& D_{6\tau}=\frac {1}{2}(3m^2-3m\kappa +2\kappa^2)=\frac {1}{2}(2m^2+m\kappa+2\kappa'^2)\,\,\,\,\,{\hbox{with}}\,\,\,\,\,\kappa'=\kappa-m.
\end{aligned}$$
Thus
$$m(Q_{-3})=\frac {\sqrt{15}}{24\times 8 \pi^3}\sum'_{m',\kappa}(A_1+A_2+A_3+A_4).$$
Now $A_1$ can be written
$$A_1=(24)^2 \left ( \frac{-m'^2+15\kappa^2-30m'\kappa}{(m'^2+15\kappa^2)^3}+\frac {2}{(m'^2+15\kappa^2)^2} \right )$$
and 
$$\sum'_{m',\kappa}A_1=(24)^2\sum'_{m',\kappa}\left (\frac {15k^2-m'^2}{(m'^2+15\kappa^2)^3}+\frac {2}{(m'^2+15\kappa^2)^2}\right ).$$

Then, we get 
$$A_2=(24)^2 \left ( \frac{m'^2+16m'\kappa+4\kappa^2}{(m'^2+m'\kappa +4\kappa^2)^3}-\frac {2}{(m'^2+m'\kappa+4\kappa^2)^2} \right )$$
Now with the change of variable $\kappa=\kappa'-m'$ we put the denominators of $A_2$ symmetric with respect to $m'$ and $\kappa'$.
So

$$A_2=(24)^2 \left ( \frac{-11m'^2+8m'\kappa'+4\kappa'^2}{(4m'^2-7m'\kappa' +4\kappa'^2)^3}-\frac {2}{(4m'^2-7m'\kappa'+4\kappa^2)^2} \right )$$
that is
$$A_2=(24)^2 \left ( \frac {1}{2}\frac{-7m'^2+16m'\kappa'-7\kappa'^2}{(4m'^2-7m'\kappa' +4\kappa'^2)^3}-\frac {2}{(4m'^2-7m'\kappa'+4\kappa^2)^2} \right )$$
and coming back to variables $m'$ and $\kappa$,

$$A_2=(24)^2 \left ( \frac {1}{2}\frac{2m'^2+2m'\kappa-7\kappa^2}{(m'^2+m'\kappa +4\kappa^2)^3}-\frac {2}{(m'^2+m'\kappa+4\kappa^2)^2} \right ).$$
The same way we obtain,
$$A_3=(24)^2 \left ( \frac{3m'^2+30m'\kappa-5\kappa^2}{(3m'^2+5\kappa^2)^3}-\frac {2}{(3m'^2+5\kappa^2)^2} \right )$$
or
$$A_3=(24)^2 \left ( \frac{3m'^2-5\kappa^2}{(3m'^2+5\kappa^2)^3}-\frac {2}{(3m'^2+5\kappa^2)^2} \right ).$$
Finally using the same tricks as for $A_2$, we obtain
$$A_4=(24)^2 \left ( \frac {1}{2}\frac{m^2+8m\kappa'+\kappa'^2}{(2m^2+m\kappa' +2\kappa'^2)^3}+\frac {2}{(2m^2+m\kappa'+2\kappa'^2)^2} \right ).$$
\end{proof}

\bigskip

From proposition 1. we notice that the Mahler measure is expressed as a sum of a modular part

$$ 
\begin{aligned}
 & \frac{3\sqrt{15}}{\pi^3}\sum'_{m',\kappa}\left ( \frac {15k^2-m'^2}{(m'^2+15\kappa^2)^3}+ \frac {-5k^2+3m'^2}{(3m'^2+5\kappa^2)^3}\right)\\
 & +\left(\frac {1}{2}\frac{2m'^2+2m'\kappa-7\kappa^2}{(m'^2+m'\kappa +4\kappa^2)^3}+\frac {1}{2}\frac{m'^2+8m'\kappa+\kappa^2}{(2m'^2+m'\kappa +2\kappa^2)^3} \right )
\end{aligned}
$$
and a part related to a Dirichlet $L$-series

$$ 
\begin{aligned}
 &+ \frac{6\sqrt{15}}{\pi^3}\sum'_{m',\kappa}\left (\frac {1}{(m'^2+15\kappa^2)^2} -\frac {1}{(3m'^2+5\kappa^2)^2}\right)\\
 &+\left(\frac{1}{(2m'^2+m'\kappa +2\kappa^2)^2} -\frac{1}{(m'^2+m'\kappa +4\kappa^2)^2}\right ).
\end{aligned}
$$

To prove that the modular part is $0$, we observe first that 

$$L(f_1,s)=\frac {1}{2}\sum'_{r,s}\frac {5r^2-3k^2}{(3r^2+5k^2)^s}\,\,\,\,{\hbox{and}}\,\,\,\,L(f_2,s)=\frac {1}{2}\sum'_{r,s}\frac {r^2-15k^2}{(r^2+15k^2)^s}$$

are the Mellin transform of the two weight $3$ modular forms

$$f_1=\frac {1}{2}\sum_{r,s\in \mathbb Z}(5r^2-3k^2)q^{3r^2+5k^2}\,\,\,\,\,\,\,\,f_2=\frac {1}{2}\sum_{r,s\in \mathbb Z}(r^2-15k^2)q^{r^2+15k^2}.$$

Then using theorem $2.4$ we know that

$$\sum'\left(\frac {1}{4}\frac{2m'^2+2m'\kappa-7\kappa^2}{(m'^2+m'\kappa +4\kappa^2)^s}+\frac {1}{4}\frac{m^2+8m\kappa'+\kappa'^2}{(2m^2+m\kappa' +2\kappa'^2)^s} \right )=L(f^+,s)$$
is the $L$-series attached to the modular $K3$-surface $\tilde{X}$.

\begin{prop}
$$
\begin{aligned}
\sum'_{m,k}\left ( \frac {-15k^2+m^2}{(m^2+15k^2)^3}+ \frac {5k^2-3m^2}{(3m^2+5k^2)^3}\right)=\\
\sum'_{m,k}\left(\frac {1}{2}\frac{2m^2+2mk-7k^2}{(m^2+mk +4k^2)^3}+\frac {1}{2}\frac{m^2+8mk+k^2}{(2m^2+mk +2k^2)^3} \right ).
\end{aligned}
$$

\end{prop}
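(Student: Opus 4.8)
The plan is to read both sides of the asserted identity as the value at $s=3$ of the $L$-function of a weight $3$ modular form attached to the imaginary quadratic field $K=\mathbb{Q}(\sqrt{-15})$, and then to prove that these two $L$-functions coincide. On the right, $m^2+mk+4k^2$ and $2m^2+mk+2k^2$ are precisely the two reduced binary forms of discriminant $-15$, and by the identity recorded just before the proposition (itself a consequence of Theorem 2.5 together with Livn\'e's criterion, Theorem 2.4) the right-hand sum equals $2L(f^+,3)$, where $f^+=g\theta_1\in S_3(15,(\tfrac{\cdot}{15}))$ is the newform of the $K3$-surface $\tilde X$. On the left, $m^2+15k^2$ and $3m^2+5k^2$ are the two reduced forms of discriminant $-60$; assembling them with the degree-two harmonic polynomials appearing as their numerators (which, after the reductions carried out in Proposition 1, are the real parts of $\alpha^2$ for the relevant lattice generators $\alpha\in K$) produces a weight $3$ form $G$ with complex multiplication by $K$ whose Mellin transform, evaluated at $s=3$, is the left-hand sum. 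Thus the proposition becomes the equality of $L(G,s)$ and $2L(f^+,s)$ at $s=3$.

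First I would make the CM structure explicit and attach to each side its $2$-dimensional $l$-adic Galois representation: $\rho_{f^+}$ from Theorem 2.5, and $\rho_G$ induced from the Hecke Gr\"ossencharacter $\psi$ of $K$ of infinity-type $z\mapsto z^2$ underlying $G$. Both are representations of $G_{\mathbb{Q}}$ induced from $K$, unramified outside $S\cup\{2\}$ with $S=\{3,5\}$, and with the same determinant $(\tfrac{\cdot}{15})\chi_{\mathrm{cyc}}^{-2}$. Because they are induced from characters of $K$, the Frobenius traces have the shape $\psi(\mathfrak p)+\overline{\psi(\mathfrak p)}$ (and vanish at inert primes), hence are even; so $\mathrm{Tr}\,F_{p,\rho_G}\equiv\mathrm{Tr}\,F_{p,\rho_{f^+}}\equiv 0\pmod 2$ while the determinants agree mod $2$. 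This is exactly what Livn\'e's criterion requires.

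Next I would apply Theorem 2.4 with $S=\{3,5\}$, so that $r=2$ and $S'=\{-1,3,5\}$. I would exhibit a finite set $T$ of primes, disjoint from $S$, with $f(T)=(\mathbb{Z}/2\mathbb{Z})^{3}\setminus\{0\}$, that is seven primes realizing every nonzero pattern of the symbols $(\tfrac{-1}{p})$, $(\tfrac{3}{p})$, $(\tfrac{5}{p})$. For each $p\in T$ I would compute $\mathrm{Tr}\,F_{p,\rho_{f^+}}=A_p$ from the recipe of Theorem 2.5 (solving $x^2+xy+4y^2=p$ or $2x^2+xy+2y^2=p$) and $\mathrm{Tr}\,F_{p,\rho_G}$ from the Fourier coefficients of $G$ (representing $p$ by a form of discriminant $-60$), and check that they agree. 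Livn\'e's criterion then forces $\rho_G$ and $\rho_{f^+}$ to share the same $L$-function, so the two Dirichlet series, and in particular their values at $s=3$, coincide; this is the claimed identity.

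The step I expect to be the main obstacle is the reconciliation at the prime $2$: $f^+$ has level $15$, whereas the discriminant $-60$ forms naturally live at level $60$. I must show that the passage from the order $\mathbb{Z}[\sqrt{-15}]$ of conductor $2$ to the maximal order does not disturb the relevant $L$-function, i.e. that $\psi$ is in fact unramified at $2$ and that the Euler factor of $G$ at $2$ matches that of $f^+$ (equivalently that $G$ is the oldform carrying $f^+$). Controlling this $2$-adic behaviour is precisely why Livn\'e's criterion, rather than a naive equality of theta series, is the right tool; once the compatible-system and parity hypotheses are verified, the remaining explicit Frobenius-trace calculations on $T$ are routine but must be performed prime by prime.
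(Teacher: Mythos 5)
You follow the paper's route in outline: read the right-hand side as $2L(f^+,3)$ via Theorem 2.5, realize the left-hand side as the value at $s=3$ of the Mellin transform of a weight-$3$ theta series attached to the discriminant $-60$ forms (the paper builds it as Rankin--Cohen brackets of theta series rather than from a Gr\"ossencharacter, a cosmetic difference), and compare by Livn\'e's criterion on a finite test set. But your instantiation of Livn\'e is not admissible. The representations being compared are $2$-adic; their determinant involves the $2$-adic cyclotomic character, and the discriminant $-60$ theta series live at level $60$ with nebentypus ramified at $2$, so $2$ must be put into $S$. With $S=\{2,3,5\}$ one has $r+1=4$, and $T$ must realize all $15$ nonzero patterns of $\left(\left(\tfrac{-1}{p}\right),\left(\tfrac{2}{p}\right),\left(\tfrac{3}{p}\right),\left(\tfrac{5}{p}\right)\right)$ --- which is exactly why the paper's test set has fifteen primes. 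Your seven primes constrain only $\left(\tfrac{-1}{p}\right),\left(\tfrac{3}{p}\right),\left(\tfrac{5}{p}\right)$ and therefore cannot distinguish a representation from its twist by the quadratic character of $\mathbb{Q}(\sqrt{2})$, which is unramified outside $2$; the test set is too small.

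The more serious gap is precisely the prime-$2$ step you flagged as the main obstacle: what you propose to prove there is false. The prime $2$ is represented by $2m^2+mk+2k^2$ (so $A_2(2)=b_2=1$ by the recipe of Theorem 2.5) but by neither $m^2+15k^2$ nor $3m^2+5k^2$ (so $A_1(2)=0$); hence the two Dirichlet series do \emph{not} coincide, and $G$ is not ``the oldform carrying $f^+$'' with matching Euler factor at $2$. In fact one checks (on $n=2,4,8,16,20,32,\dots$) that $G=(1-V_2+8V_4)f^+$, i.e. $\sum_n A_1(n)n^{-s}=\left(1-2^{-s}+2^{3-2s}\right)L(f^+,s)$, and the proposition holds only because the discrepancy factor $1-2^{-s}+2^{3-2s}$ equals $1$ at the special point $s=3$ (where $-2^{-3}+2^{-3}=0$); at $s=2$ it equals $5/4$, so the identity is genuinely particular to $s=3$ and cannot follow from an equality of $L$-functions alone. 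Thus even after Livn\'e gives $A_1(p)=A_2(p)$ at the good primes, you still need this conductor-$2$-order versus maximal-order relation at $2$ together with the evaluation at $s=3$ --- the same phenomenon as the factors $1\pm 2^{1-s}+2^{1-2s}$ that Lemma 3.4 explicitly carries along in the weight-$1$ part, which do not trivialize at $s=2$. (The paper's own write-up is also silent on the even-index coefficients, but its fifteen-prime test set is the correct one.) A final trap in your sketch: on the non-principal class the harmonic numerator must be $5k^2-3m^2$, i.e. the bracket $[\theta_3,\theta_5]$, not the naive $\Re(\alpha^2)$ value $3m^2-5k^2$ your parenthesis suggests; with that sign wrong the Frobenius-trace check itself fails, giving $+14$ against $A_2(17)=-14$ already at $p=17$.
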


\bigskip

\begin{proof}
Let $a$ a rational integer and denote $\theta_a=\sum_{n\in \mathbb Z}q^{an^2}$ the weight $1/2$ modular form for the congruence group $\Gamma=\Gamma_0(4)$. Denote
$$f_1:=[\theta_5,\theta_3]\,\,\,\,\,\,\,\,\,\,f_2:=[\theta_1,\theta_{15}]$$
the Rankin-Cohen brackets wich are modular forms of weight $3$ for $\Gamma$.

Recall that, if $f$ and $g$ are modular forms of respective weight $k$ and $l$ for a congruence subgroup, then its Rankin-Cohen bracket is the modular form of weight $k+l+2$ defined by
$$[g,h]:=kgh'-lg'h.$$

Thus we get the two weight $3$ modular forms
$$f_1=\frac {1}{2}\sum_{r,s\in \mathbb Z}(5r^2-3k^2)q^{3r^2+5k^2}\,\,\,\,\,\,\,\,f_2=\frac {1}{2}\sum_{r,s\in \mathbb Z}(r^2-15k^2)q^{r^2+15k^2}.$$

So to compare $L(f_1,s)+L(f_2,s)=\sum\frac {A_1(n)}{n^s}$ and $L(f^+,s)=\sum\frac {A_2(n)}{n^s}$ we apply Livn\'e's criterion.

First we determine an effective test set $T$ for the respective representations
$$T=\{7,11,13,17,19,23,29,31,41,43,53,61,71,73,83\}.$$

Then we compute the corresponding $A_1(p)$ and $A_2(p)$.

\begin{center}
\begin{tabular}{|l||r|r|r|r|r|r|r|r|r|r|r|r|r|r|r|}
\hline

p & 7 & 11 & 13 & 17 & 19 & 23 & 29 & 31 & 41 & 43 & 53 & 61 & 71 &73 & 83 \\ \hline
$A_1(p)$ & 0 & 0 & 0 & -14 & -22 & 34 & 0 & 2 & 0 & 0 & -86 & -118 & 0 & 0 & 154\\ \hline
$A_2(p)$ & 0 & 0 & 0 & -14 & -22 & 34 & 0 & 2 & 0 & 0 & -86 & -118 & 0 & 0 & 154\\ \hline
\end{tabular}
\end{center}

This achieves the proof of the proposition.

\end{proof}

\begin{prop}
$$
\begin{aligned}
\frac{6\sqrt{15}}{\pi^3}\sum'_{m,k} & \frac {1}{(m^2+15k^2)^2} -\frac {1}{(3m^2+5k^2)^2}\\
 & +\frac{1}{(2m^2+mk +2k^2)^2}-\frac{1}{(m^2+mk +4k^2)^2}\\
 &  =\frac {8}{5}d_3
\end{aligned}
$$
\end{prop}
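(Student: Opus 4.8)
The plan is to recognise the four theta-type sums as Epstein zeta functions of binary quadratic forms attached to the imaginary quadratic field $K=\mathbb{Q}(\sqrt{-15})$, and to evaluate the combination at $s=2$ by genus theory. First I would note that $m^2+15k^2$ and $3m^2+5k^2$ are the two reduced primitive forms of discriminant $-60$, while $m^2+mk+4k^2$ and $2m^2+mk+2k^2$ are the two reduced primitive forms of discriminant $-15$. Both discriminants belong to $K$ (since $-60=4\cdot(-15)$): here $-15$ is the fundamental discriminant (maximal order) and $-60$ that of the order of conductor $2$; each has class number $2$ and $w=2$ units. Writing $Z_Q(s)=\sum'_{m,k}Q(m,k)^{-s}$ and using $Z_Q(s)=w\,\zeta(s,[Q])$, the two differences appearing in the proposition are, up to the factor $w=2$, the $L$-functions of the nontrivial (genus) character $\psi$ of the respective (ring) class group, since in each case the principal form $m^2+mk+4k^2$, resp. $m^2+15k^2$, lies in the principal genus and its companion in the other genus.

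For the fundamental discriminant $-15=(-3)\cdot 5$ the classical Kronecker factorisation of a genus character gives
$$Z_{m^2+mk+4k^2}(s)-Z_{2m^2+mk+2k^2}(s)=2\,L(s,\chi_{-3})\,L(s,\chi_{5}),$$
so the disc $-15$ piece contributes $-2L(s,\chi_{-3})L(s,\chi_5)$ at $s=2$. For discriminant $-60$ the same genus character is, at every odd prime, still $\chi_{-3}\chi_5$: the principal/non-principal split is governed by $p\bmod 15$, i.e. by $\bigl(\tfrac{-3}{p}\bigr)=\bigl(\tfrac{5}{p}\bigr)$, exactly as in the computation of $A_p$ in Theorem 2.4, and one checks directly that the Euler factors at $3$ and $5$ also agree. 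The only discrepancy is at $p=2$, which divides the conductor, and this local analysis is the hard part. I would compute the $2$-adic representation numbers $r_{m^2+15k^2}(2^\nu)-r_{3m^2+5k^2}(2^\nu)$ directly; these produce the local generating series $(1+2\cdot 2^{-s}+2\cdot 2^{-2s})/(1+2^{-s})^2$, whereas the $2$-factor of $L(s,\chi_{-3})L(s,\chi_5)$ is $1/(1+2^{-s})^2$ (because $\chi_{-3}(2)=\chi_5(2)=-1$). Hence the odd parts coincide and
$$Z_{m^2+15k^2}(s)-Z_{3m^2+5k^2}(s)=2\,L(s,\chi_{-3})\,L(s,\chi_5)\,(1+2\cdot 2^{-s}+2\cdot 2^{-2s}),$$
the correcting factor equalling $1+\tfrac12+\tfrac18=\tfrac{13}{8}$ at $s=2$.

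Finally I would assemble the two contributions at $s=2$, namely $\tfrac{13}{4}L(2,\chi_{-3})L(2,\chi_5)-2L(2,\chi_{-3})L(2,\chi_5)=\tfrac54\,L(2,\chi_{-3})L(2,\chi_5)$. The character $\chi_5$ then disappears because $2$ is a critical argument for the even character $\chi_5$: the functional equation, equivalently the generalized Bernoulli number $B_{2,\chi_5}=\tfrac45$, yields the closed form $L(2,\chi_5)=\tfrac{4\pi^2}{25\sqrt5}$, which is the input I would take from the recent results on Dirichlet $L$-series cited in the introduction. Substituting,
$$\sum'_{m,k}\Bigl(\tfrac{1}{(m^2+15k^2)^2}-\tfrac{1}{(3m^2+5k^2)^2}+\tfrac{1}{(2m^2+mk+2k^2)^2}-\tfrac{1}{(m^2+mk+4k^2)^2}\Bigr)=\tfrac54\cdot\tfrac{4\pi^2}{25\sqrt5}L(2,\chi_{-3})=\frac{\pi^2}{5\sqrt5}L(2,\chi_{-3}),$$
and multiplication by $\tfrac{6\sqrt{15}}{\pi^3}$ gives $\tfrac{6\sqrt3}{5\pi}L(2,\chi_{-3})=\tfrac85 d_3$, as claimed. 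The two points demanding genuine care are the conductor-$2$ correction at the prime $2$ and the bookkeeping of signs together with the factor $w=2$.
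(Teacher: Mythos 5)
Your proposal is correct, and the final bookkeeping agrees with the paper exactly: you arrive at $\tfrac{5}{4}L(2,\chi_{-3})L(2,\chi_5)$ for the bare lattice sum, your correcting factor $1+2\cdot 2^{-s}+2\cdot 2^{-2s}$ is literally the paper's $1+\tfrac{1}{2^{s-1}}+\tfrac{1}{2^{2s-1}}$, and the value $L(2,\chi_5)=\tfrac{4\pi^2}{25\sqrt5}$ (which you rederive from $B_{2,\chi_5}=\tfrac45$, whereas the paper simply cites Zucker--McPhedran) yields $\tfrac{6\sqrt3}{5\pi}L(2,\chi_{-3})=\tfrac85 d_3$ as required. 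But your route to the two key difference identities is genuinely different from the paper's. The paper proves a four-part lemma: the \emph{sums} $Q(2,1,2;s)+Q(1,1,4;s)=2\zeta(s)L_{-15}(s)$ (via Zagier--Gangl) and $Q(1,0,15;s)+Q(3,0,5;s)=2(1+\tfrac{1}{2^{2s-1}}-\tfrac{1}{2^{s-1}})\zeta(s)L_{-15}(s)$ (via Williams's Lambert-series expansion of $\phi(q)\phi(q^{15})+\phi(q^3)\phi(q^5)$, a Mellin transform, and a splitting of $L_{-15}$ into its even and odd parts), and then obtains the two \emph{differences} by subtracting these from the explicit Zucker--Robertson evaluations of $Q(1,1,4;s)$ and $Q(1,0,15;s)$. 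You instead go straight at the differences by genus theory: Kronecker's factorization of the genus character gives the discriminant $-15$ identity at once, and for the non-maximal order of discriminant $-60$ you match odd Euler factors and compute the $2$-adic representation numbers directly, producing the local series $(1+2\cdot 2^{-s}+2\cdot 2^{-2s})/(1+2^{-s})^2$ against the $(1+2^{-s})^{-2}$ factor of $L(s,\chi_{-3})L(s,\chi_5)$ (I checked this against $r_1(2^\nu)-r_2(2^\nu)$ for $\nu\le 4$; it is right). Your approach is more self-contained and conceptual --- it never needs identities (1) and (2) of the paper's lemma, nor Williams's coefficients $a_n(-60)$ --- while the paper's is citation-driven, outsourcing precisely the delicate conductor-$2$ arithmetic to the published closed forms. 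The one point where your sketch still owes a real argument is the step you yourself flag: justifying the multiplicative splitting (i.e., that $\tfrac12(r_1-r_2)$ is multiplicative, which for a non-maximal order requires some care at the conductor) and carrying out the $2$-adic count rigorously; the numbers you assert there are correct, so the gap is one of detail, not of substance.
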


\begin{proof}
We denote
$$L_f(s):=L(\chi _f,s)$$
the Dirichlet's $L$-series for the character $\chi_f$ attached to the quadratic field $\mathbb Q(\sqrt{f})$.

The proof follows from a lemma.

\begin{lem}

\begin{enumerate}

\item $$\sum'_{m,k}\left ( \frac {1}{(2m^2+mk+k^2)^s}+\frac {1}{m^2+mk+4k^2)^s}\right )=2\zeta(s)L_{-15}(s)$$

\item $$\sum'_{m,k} \left (\frac {1}{(3m^2+5k^2)^s}+\frac {1}{(m^2+15k^2)^s} \right ) =2(1+\frac {1}{2^{2s-1}}-\frac {1}{2^{s-1}}) \zeta(s)L_{-15}(s)$$

\item $$\sum'_{m,k}\left ( \frac {1}{(m^2+mk+4k^2)^s}-\frac {1}{2m^2+mk+2k^2)^s}\right )=2L_{-3}(s)L_5(s)$$

\item $$\sum'_{m,k} \left (\frac {1}{(m^2+15k^2)^s}-\frac {1}{(3m^2+5k^2)^s} \right ) =2(1+\frac {1}{2^{2s-1}}+\frac {1}{2^{s-1}}) L_{-3}(s)L_{5}(s)$$

\end{enumerate}

\end{lem}

\bigskip

\begin{proof}
The assertion (1) follows from the result \cite{ZG}
$$\sum'\left ( \frac {1}{(2m^2+mk+k^2)^s}+\frac {1}{m^2+mk+4k^2)^s}\right )=\zeta_{\mathbb Q(\sqrt{-15})}(s)$$
and the formula
$$\zeta_{\mathbb Q(\sqrt{-15})}(s)=\zeta(s)L_{-15}(s).$$
The assertion (2) follows from results of K. Williams \cite{Wi} and Zucker \cite{Zu1}.
Taking Williams's notations we set

$$\phi(q):=\sum_{-\infty}^{+\infty}q^{n^2}$$
and get

$$\phi(q)\phi(q^{15})+\phi(q^3)\phi(q^5)=2+\sum_{n\geq 1}a_n(-60)\frac {q^n}{1-q^n},$$
where
$$a_n(-60)=
\begin{cases}
0 & \text{if $n\equiv 0,3,5,6,9,10,$ (mod. $60$)}\\
2   & \text{if $n\equiv 1,4,8,14,16,17,19,22,23,26,31,32,47,49,53,58 $(mod. $60$)}\\
-2  & \text{if $n\equiv 2,7,11,13,28,29,34,37,38,41,43,44,46,52,56,59$(mod. $60$)}.
\end{cases}
$$

As explained in \cite{Zu1}, often we may get 
$$Q(a,b,c;s)=\sum' \frac {1}{(am^2+bmn+cn^2)^s}$$
in terms of $L_{\pm h}$ when expressing them as Mellin transforms of products of various Jacobi functions $\theta_3(q)$ for different arguments.

More precisely,

$$
\begin{aligned}
Q(1,0,\lambda; s ) & =\frac {1}{\Gamma(s)}\int_0^{\infty} t^{s-1}\sum' e^{-(m^2t+\lambda n^2t)}dt\\
                & =\frac {1}{\Gamma(s)}\int_0^{\infty} (\theta_3(q)\theta_3(q^{\lambda})-1)dt
\end{aligned}
$$

where $e^{-t}=q$ and
$$\theta_3(q)=1+2q^2+2q^4+2q^9+\hdots;$$
thus writing $\theta_3(q)\theta_3(q^{\lambda})-1$ as a Lambert series $\sum_{n\geq 1} a_n\frac {q^n}{1-q^n}$, very often the integral is given in terms of $L$-series.

So we get

$$
\begin{aligned}
Q(1,0,15;s)+Q(3,0,5;s) & = \frac {1}{\Gamma (s)}\int_0^{\infty} t^{s-1}(\theta_3(q)\theta_3(q^{15})+\theta_3(q^3)\theta_3(q^5)-2)dt\\
                       & =\frac {1}{\Gamma(s)}\int_0^{+\infty}t^{s-1}(\sum_{n\geq 1}a_n(-60)\frac {e^{-tn}}{1-e^{-tn}})dt.
\end{aligned}
$$

Since
$$\Gamma(s)=\int_0^{+\infty} e^{-y}y^{s-1}dy$$
making the change variable $nt=y$, it follows

$$
\begin{aligned}
\frac {1}{\Gamma(s)}\int_0^{+\infty}t^{s-1}\frac {e^{-tn}}{1-e^{-tn}}dt & = \int_0^{+\infty}\left( \frac {y}{n}\right)^{s-1}\frac {e^{-y}}{1-e^{-y}}\frac {dy}{n}\\ 
                                                      & =\frac  {1}{\Gamma(s)}\frac{1}{n^s}\int_0^{+\infty}\frac {y^{s-1}}{e^y-1}dy\\ 
                                             & =\frac {1}{n^s} \zeta(s).
\end{aligned}
$$

Thus
$$Q(1,0,15;s)+Q(3,0,5;s)=\zeta(s)\sum_{n\geq 1}a_n(-60)\frac {1}{n^s}.$$
But

$$
\begin{aligned}
L_{-60}(s) & = \frac {1}{1^s}- \frac {1}{7^s}- \frac {1}{11^s}- \frac {1}{13^s}+ \frac {1}{17^s}+ \frac {1}{19^s}+ \frac {1}{23^s}+ \frac {1}{31^s}\\
            & -\frac {1}{37^s}- \frac {1}{41^s}- \frac {1}{43^s}+\frac {1}{47^s}+ \frac {1}{49^s}+ \frac {1}{53^s}- \frac {1}{59^s}+ \hdots \text{ (mod. $60$)}
\end{aligned}
$$

and 

$$
\begin{aligned}
L_{-15}(s)= & \frac {1}{1^s}+\frac {1}{2^s}+ \frac {1}{4^s}- \frac {1}{7^s}+ \frac {1}{8^s}- \frac {1}{11^s}- \frac {1}{13^s}\\
            & -\frac {1}{14^s}+ \frac {1}{16^s}+ \frac {1}{17^s}+\frac {1}{19^s}- \frac {1}{22^s}+ \hdots \text{ (mod. $15$)}.
\end{aligned}
$$

So,

$$
\begin{aligned}
& \frac {1}{2} \sum_{n\geq 1}a_n(-60)\frac {1}{n^s} = L_{-60}(s)+\frac {1}{2^s}(-1+ \frac {1}{2^s}+ \frac {1}{4^s}+ \frac {1}{7^s}+ \frac {1}{8^s}+ \frac {1}{11^s}+ \frac {1}{13^s}- \frac {1}{14^s}\\
      &    +\frac {1}{16^s}- \frac {1}{17^s}- \frac {1}{19^s}-\frac {1}{22^s}- \frac {1}{23^s} -\frac {1}{26^s}- \frac {1}{28^s}+ \frac {1}{29^s}+ \hdots) \text{ (mod. $30$)}.          
\end{aligned}
$$

Let us define
$$L_{-15}(s):=\sum_{n\geq 1}\frac {\chi_{-15}(n)}{n^s}=L_{+}(s)+L_{-}(s)$$
where
$$L_{+}(s)=\sum_{n\geq 1, \,\,\,n\,\,\, \text{pair}}\frac {\chi_{-15}(n)}{n^s}\,\,\,\,\,\,\,\,\,\,\,\,L_{-}(s)=\sum_{n\geq 1,\,\,\,n\,\,\, \text{impair}}\frac {\chi_{-15}(n)}{n^s}.$$

Obviously,
$$L_{+}(s)=\frac {1}{2^s}L_{-15}(s),\,\,\,\,\,L_{-60}(s)=L_{-}(s),\,\,\,\,\,\,L_{-15}(s)=L_{-}(s)+\frac {1}{2^s}L_{-15}(s).$$

Thus,

$$
\begin{aligned}
\frac {1}{2}\sum_{n\geq 1}\frac {a_n(-60)}{n^s} & =L_{-}(s)+\frac {1}{2^s}(L_{+}(s)-L_{-}(s))\\
                          & = (1+\frac {1}{2^{2s-1}}-\frac {1}{2^{s-1}})L_{-15}(s).
\end{aligned}
$$

From this last equality we deduce the formula (2).

From \cite{Zu-Ro} we get
$$Q(1,1,4;s)=\zeta(s)L_{-15}(s)+L_{-3}(s)L_{5}(s)$$
so from formula (1) we obtain the formula (3).

Equality (4) derives from a formula by Zucker and Robertson \cite{Zu-Ro} giving

$$
\begin{aligned}
Q(1,0,15;s) & =(1-\frac {1}{2^{s-1}}+\frac {1}{2^{2s-1}})\zeta(s)L_{-15}(s)\\
       &  + (1+\frac {1}{2^{s-1}}+\frac {1}{2^{2s-1}})L_{-3}(s)L_{5}(s).
\end{aligned}
$$

So, thanks to formula (2)

$$
\begin{aligned}
Q(1,0,15;s)-Q(3,0,5;s) & =2Q(1,0,15;s)-(Q(1,0,15;s)+Q(3,0,5;s))\\
         & =2(1+\frac {1}{2^{s-1}}+\frac {1}{2^{2s-1}})L_{-3}(s)L_{5}(s)
\end{aligned}
$$

\end{proof}
By substracting (3) to (4) for $s=2$ and using \cite{Zu1}
$$L_5=\frac {4\pi^2}{25\sqrt{5}},$$
we get the proposition.

\end{proof}

\bigskip

The proof of theorem 1.1 is just a combination of the three propositions.

\bigskip

\bigskip
\bigskip

\end{document}